\pgfplotsset{compat=1.6}
\pgfplotsset{soldot/.style={color=blue,only marks,mark=*}} \pgfplotsset{holdot/.style={color=blue,fill=white,only marks,mark=*}}
\newtheorem{theorem}{Theorem}[section]
\newtheorem{lemma}[theorem]{Lemma}
\newtheorem{corollary}[theorem]{Corollary}
\theoremstyle{definition}
\newtheorem{definition}[theorem]{Definition}
\newtheorem{convention}[theorem]{Convention}
\theoremstyle{remark}
\newcommand{\RR}{{\rm I\kern -1.6pt{\rm R}}}
\def\deq{:=}
\def\R{\mathbb{R}}
\def\id{\mathbbm{1}}
\def\H{\mathbb{H}}
\DeclareMathOperator{\Ima}{Im}
\def\cc{Carnot-Carath\'{e}odory~}
\def\H{\mathbb{H}}
\newcommand{\pilip}[1]{\pi_{#1}^{\text{Lip}}}
\newcommand{\length}[1]{\ell}
\def\HH{\mathbb H}
\def\NN{\mathbb N}
\def\RR{\mathbb R}
\newcommand{\lip}[1]{\text{Lip}(#1)}
\def\lmin{\ell_{\text{min}}}
\newcommand{\core}[1]{{#1}_\infty}
\DeclareMathOperator{\Lip}{Lip}
\DeclareMathOperator{\diam}{diam}
\newcommand\blfootnote[1]{%
  \begingroup
  \renewcommand\thefootnote{}\footnote{#1}%
  \addtocounter{footnote}{-1}%
  \endgroup
}
\begin{document}

\title{Existence of length minimizers in homotopy classes of Lipschitz paths in $\mathbb{H}^1$}
\author{Daniel Perry}
{\let\newpage\relax\maketitle}

  \begin{abstract}
We show that for any purely 2-unrectifiable metric space $M$, for example the Heisenberg group $\HH^1$ equipped with the \cc metric, every homotopy class $[\gamma]$ of Lipschitz paths contains a length minimizing representative $\core{\gamma}$ that is unique up to reparametrization. 
The length minimizer $\core{\gamma}$ is the core of the homotopy class $[\gamma]$ in the sense that the image of $\core{\gamma}$ is a subset of the image of any path contained in $[\gamma]$. Furthermore, the existence of length minimizers guarantees that only the trivial class in the first Lipschitz homotopy group of $M$ with a base point can be represented by a loop within each neighborhood of the base point. The results detailed here are used in \cite{perry2024universal} to define and prove properties of a universal Lipschitz path space over $\HH^1$.
 \blfootnote{{\it Key words and phrases.} Heisenberg group, contact manifolds, unrectifiability, geometric measure theory, sub-Riemannian manifolds, metric trees \\ {\bf Mathematical Reviews subject classification.} Primary: 53C17, 28A75 ; Secondary: 57K33, 54E35 \\ {\bf Acknowledgments.} This material is based upon work supported by the National Science Foundation under Grant Number DMS 1641020.  The author was also supported by NSF awards 1507704 and 1812055 and ARAF awards in 2022 and 2023.}
  \end{abstract}


\normalem

\section{Introduction}

%

In this paper, we prove the following theorem.

\begin{theorem}\label{main result}
Let $M$ be a purely 2-unrectifiable metric space, for example the Heisenberg group $\H^1$ endowed with the \cc metric. For any homotopy class $[\gamma]$ of Lipschitz paths in $M$, there exists a length minimizing Lipschitz path $\core{\gamma}\in[\gamma]$ where
\[
\length{M}(\core{\gamma})=\inf\{\length{M}(\gamma)~|~\gamma\in[\gamma]\}.
\]
Moreover, for any representative $\gamma\in[\gamma]$ in the class, $\Ima(\core{\gamma})\subset\Ima(\gamma)$.
\end{theorem}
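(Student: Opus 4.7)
The plan is to exploit the factorization of Lipschitz maps from the unit square through metric trees, which is the signature consequence of pure 2-unrectifiability, and to extract the length minimizer as the tree geodesic common to every representative.

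The main input is that for $M$ purely 2-unrectifiable, every Lipschitz map $h \colon [0,1]^2 \to M$ factors as $h = \phi \circ \pi$ with $\pi \colon [0,1]^2 \to T$ continuous onto an $\R$-tree $T$ and $\phi \colon T \to M$ Lipschitz (cf.\ work of Z\"ust and Wenger--Young). Applied to a Lipschitz homotopy $h$ between two representatives $\gamma_0, \gamma_1 \in [\gamma]$ (say with fixed endpoints $p,q$), the four boundary sides of $[0,1]^2$ descend to a closed loop in $T$, and since every loop in an $\R$-tree backtracks completely, the $T$-geodesic $\tilde\eta$ between $\pi(0,0)$ and $\pi(0,1)$ must already lie inside the image of the lifts of the other three sides. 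Its push-forward $\eta = \phi \circ \tilde\eta$ is then a Lipschitz path in $M$ from $p$ to $q$, with $\Ima(\eta) \subset \Ima(\gamma_i)$ for $i=0,1$, length at most $\min(\ell(\gamma_0),\ell(\gamma_1))$ by the Lipschitz bound on $\phi$ and the geodesic property in $T$, and $\eta \in [\gamma]$ because the straight-line tree-homotopy from $\tilde\gamma_i$ to $\tilde\eta$ pushes through $\phi$.

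For the full minimization, I would iterate this pairwise construction along a length-minimizing sequence $\gamma_n \in [\gamma]$, producing cores $\eta_n \in [\gamma]$ with lengths decreasing to the infimum and with images sitting inside a fixed compact subset of $M$; constant-speed reparametrization and Arzel\`a--Ascoli then extract a limit $\core{\gamma}$ realizing the infimum. Pairing $\core{\gamma}$ (or a large-$n$ core $\eta_n$ close to it) with an arbitrary representative $\gamma \in [\gamma]$ through the pairwise construction yields a core whose length is at most $\ell(\core{\gamma})$, which by minimality must equal $\core{\gamma}$ up to reparametrization, simultaneously delivering uniqueness and the universal image containment $\Ima(\core{\gamma}) \subset \Ima(\gamma)$.

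The main obstacle I anticipate is ensuring $\core{\gamma}$ remains in $[\gamma]$ upon passing to the Arzel\`a--Ascoli limit, since Lipschitz homotopy classes are not automatically closed under uniform limits. I plan to address this by building explicit Lipschitz homotopies from $\eta_n$ to $\core{\gamma}$ for all large $n$ via tree factorizations of suitably chosen fillings of the thin region between their images; the uniform Lipschitz bound on the $\eta_n$ combined with the fact that each $\eta_n$ already lies in $[\gamma]$ should then close the argument.
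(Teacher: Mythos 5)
Your pairwise construction is essentially the paper's key lemma (Lemma~\ref{Desirable homotopy}): factor a homotopy between two representatives through a metric tree via Theorem~\ref{Wenger and Young}, take the tree geodesic between the images of the two endpoint classes, push it forward, and homotope to it along the tree geodesics from $\psi(t,0)$ to $\eta(t)$. That part is sound, and the extraction of $\core{\gamma}$ by Arzel\`{a}--Ascoli from a minimizing sequence with uniformly bounded Lipschitz constants and images in the compact set $\Ima(\gamma_1)$ matches the paper.

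The genuine gap is exactly where you anticipate it: showing $\core{\gamma}\in[\gamma]$. Your proposed fix --- building homotopies from $\eta_n$ to $\core{\gamma}$ ``via tree factorizations of suitably chosen fillings of the thin region between their images'' --- is circular: Theorem~\ref{Wenger and Young} factors a Lipschitz map of the square that you must already possess, so you would need a Lipschitz homotopy between $\eta_n$ and $\core{\gamma}$ before you could factor anything, and in a purely 2-unrectifiable space two uniformly close Lipschitz paths with common endpoints are not automatically Lipschitz homotopic rel endpoints. The paper's resolution is to make the pairwise lemma do more work: the tree-geodesic homotopy $H'$ it produces from the fixed base path $\gamma_1$ to each $\eta_n$ has $\Lip(H')=\Lip(\gamma_1)$ (bounded by the Lipschitz constant of the \emph{base path}, not of the original, possibly wild, homotopy) and $\Ima(H')\subset\Ima(\gamma_1)$. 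One then applies Arzel\`{a}--Ascoli a second time, to the sequence of homotopies $H_n$ themselves, and the uniform limit $\core{H}$ is a Lipschitz homotopy from $\gamma_1$ to $\core{\gamma}$. You already have all the ingredients for this (your ``straight-line tree-homotopy'' is the right map); what is missing is the observation that its Lipschitz constant is controlled uniformly in $n$, which is what lets you pass the homotopies, and not just the paths, to the limit. Separately, note that the universal containment $\Ima(\core{\gamma})\subset\Ima(\gamma)$ for \emph{every} representative requires an extra argument (the paper's Theorem~\ref{uniqueness of core}): one reparametrizes $\core{\gamma}$ by arc length and shows that in the tree generated by any homotopy from $\gamma$ to $\core{\gamma}$, the side $\psi|_{I\times\{1\}}$ is itself the geodesic, so the pairwise construction applied to this pair returns $\core{\gamma}$ unchanged; your sketch asserts this conclusion but the minimality-forces-equality step needs the arc length normalization to go through.
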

A length minimizer $\core{\gamma}\in[\gamma]$ can be thus thought of as the core of the homotopy class $[\gamma]$ where the extraneous branches of the paths in the class have been pruned. An immediate consequence is that for every point in a purely 2-unrectifiable metric space, only the trivial class in the first Lipschitz homotopy group can be represented by a loop within each neighborhood of the point (Corollary~\ref{heisenberg non-singular points}). 

Studying metric spaces, in particular Heisenberg groups endowed with a \cc metric, through Lipschitz homotopies was introduced in \cite{Dej} with the definition Lipschitz homotopy groups. Since, Lipschitz homotopy groups have been calculated for various sub-Riemannian manifolds in \cite{Dej}, \cite{Haj}, \cite{HajSch}, \cite{HajTys}, \cite{perry2020lipschitz}, and \cite{Weg}. For an overview of sub-Riemannian geometry, see \cite{Mon}.

Results in \cite{Dej}, \cite{perry2020lipschitz}, and \cite{Weg} concerning the Lipschitz homotopy groups of the Heisenberg group $\H^1$ and contact 3-manifolds rely heavily on these sub-Riemannian manifolds endowed with the \cc metric being purely 2-unrectifiable metric spaces in the sense of \cite{Amb}. As is shown in \cite{Amb}, the Heisenberg group $\H^1$ is purely 2-unrectifiable and, as is shown in \cite{perry2020lipschitz}, any contact 3-manifold endowed with a sub-Riemannian metric is purely 2-unrectifiable. We likewise make significant use of results of purely 2-unrectifiable metric spaces to conclude the existence of length minimizers in homotopy classes.

The key step to the proof of Theorem~\ref{main result} is, given a Lipschitz path $\gamma$ in a homotopy class $[\gamma]$, determining a sequence of Lipschitz paths that are uniformly Lipschitz homotopic to $\gamma$ and whose lengths converge to the infimum. Once such a sequence is obtained, Arzel\`{a}-Ascoli theorem yields a length minimizing path $\core{\gamma}$ together with a homotopy to $\gamma$. 

To find this sequence of paths, we apply a lemma (Lemma~\ref{Desirable homotopy}) which, given any homotopy from $\gamma$ to any Lipschitz path $\beta$, constructs a homotopy with controlled Lipschitz constant from $\gamma$ to a shorther path $\beta'$. The proof of Lemma~\ref{Desirable homotopy} relies on a result of Wenger and Young in \cite{Weg} concerning a factorization of Lipschitz maps with purely 2-unrectifiable target though a metric tree. Their result is stated in Theorem~\ref{Wenger and Young}.

The results in this paper are utilized in \cite{perry2024universal} to define a universal Lipschitz path space over $\H^1$ and prove requisite properties. Of note, Lemma~\ref{Desirable homotopy} and Theorem \ref{uniqueness of core} illustrate that purely 2-unrectifiable spaces are tree-like homotopy spaces. As is reported in Corollary 4.8 in \cite{perry2024universal}, the universal Lipschitz path space over a tree-like homotopy space is a Lipschitz simply connected length space which satisfies a unique lifting property.



\medskip
\noindent {\bf Acknowledgment}. The author wishes to thank Chris Gartland for his invaluable input on this paper, in particular his suggestion to use Arzel\`{a}-Ascoli theorem. In additon to Gartland, the author would also like to thank Fedya Manin, David Ayala, Lukas Geyer, and Carl Olimb for their input, thoughts, and support as this paper came together.

\section{Background}


\subsection{Homotopy, geodesics, and metric trees}

\begin{convention}
Throughout this paper, $I=[0,1]$ is the closed interval endowed with the Euclidean metric. All paths will have domain $I$. For a metric space $M$ and a path $\gamma:I\rightarrow M$, the length of the path $\gamma$ is denoted $\length{M}(\gamma)$.  For metric spaces $A$ and $M$, the Lipschitz constant of a Lipschitz function $f:A\rightarrow M$ will be denoted by $\Lip(f)$. 
\end{convention}

As we proceed, we will endow $I\times I$ with the $L^1$ metric: for $(s,t),(s',t')\in I\times I$,
\[
d^1((s,t),(s',t'))=|s-s'|+|t-t'|.
\]
The metric $d^1$ is Lipschitz equivalent to the Euclidean metric on $I\times I$.

\begin{definition}\label{path classes}
Let $M$ be a metric space. Two Lipschitz paths $\gamma,\gamma':I\longrightarrow M$ are \emph{homotopic rel endpoints} if the initial points $\gamma(0)=\gamma'(0)$ and end points $\gamma(1)=\gamma'(1)$ of the paths agree and there exists a Lipschitz map $H:I\times I\rightarrow M$ such that
\[
H|_{I\times\{0\}}=\gamma,\hspace{.25cm} H|_{I\times\{1\}}=\gamma',\hspace{.25cm} H|_{\{0\}\times I}=\gamma(0),\hspace{.25cm}\text{and}\hspace{.25cm} H|_{\{1\}\times I}=\gamma(1).
\]
The map $H$ is a \emph{homotopy} from $\gamma$ to $\gamma'$. For a Lipschitz path $\gamma$, the class of all Lipschitz paths homotopic rel endpoints to $\gamma$ is denoted $[\gamma]$ and is referred to as the \emph{homotopy class} of $\gamma$. 
\end{definition}

The homotopy classes of loops based at a point $x_0\in M$ are the elements of the first Lipschitz homotopy group $\pilip{1}(M,x_0)$ of the metric space $M$. For the complete definition of Lipschitz homotopy groups and the initial study of $\pilip{1}(\H^1)$, see \cite{Dej}. Another example of studying first Lipschitz homotopy groups of purely 2-unrectifiable metric spaces can be found in \cite{perry2020lipschitz} where contact 3-manifolds are considered. 

\begin{definition}\label{geodesic definition}
Let $(M,d)$ be a metric space. Let $x,y\in M$ and let $\eta: I\rightarrow M$ be a path from $\eta(0)=x$ to $\eta(1)=y$. The path $\eta$ is \emph{arc length parametrized} if for any $t,t'\in I$,
\[
\length{M}\left(\left.\eta\right|_{[t,t']}\right)=\length{M}(\eta)~|t'-t|.
\]
The path $\eta$ is a \emph{shortest path} from $x$ to $y$ if 
\[
\length{M}(\eta)=d(x,y).
\]
The path $\eta$ is a \emph{geodesic} from $x$ to $y$ if for any $t,t'\in I$,
\[
d(\eta(t),\eta(t'))=d(x,y)~|t'-t|.
\]
\end{definition}

Every geodesic is a shortest path between its endpoints and is arc length parametrized. 
Thus, every geodesic is Lipschitz with Lipschitz constant equal to its length.

We will primarily be discussing geodesics with reference to metric trees.  Metric trees were originally introduced in \cite{tits1977theorem}. For a selection of results concerning metric trees, see \cite{aksoy2006selection}, \cite{aksoy2010some}, and \cite{mayer1992universal}.

\begin{definition}
A non-empty metric space $T$ is a \textit{metric tree} if for any $x,x'\in T$, there exists a unique arc joining $x$ and $x'$ and there is a geodesic $\eta$ from $x$ to $x'$. A subset $T'\subset T$ of a metric tree is a \textit{subtree} if $T'$ is a metric tree with reference to the metric on $T$ restricted to $T'$.
\end{definition}


\subsection{Wenger and Young's factorization through a metric tree}

We make significant use of the work of Wenger and Young in \cite{Weg} to show the existence of a length minimizing representative, in particular the following factorization:

\begin{theorem}[Theorem 5 in \cite{Weg}]\label{Wenger and Young}
Let $A$ be a quasi-convex metric space with quasi-convexity constant $C$ and with $\pilip{1}(A)=0$. Let furthermore $M$ be a purely 2-unrectifiable metric space. Then every Lipschitz map $f:A\rightarrow M$ factors through a metric tree $T$,
\begin{center}
\begin{tikzcd}
A\arrow[rr, "f"] \arrow[dr, "\psi"'] && M,  \\ 

& T \arrow[ur, "\varphi"']&
\end{tikzcd}
\end{center}
where $\Lip(\psi)=C\Lip(f)$ and $\Lip(\varphi)=1$. 
\end{theorem}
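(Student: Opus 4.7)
The plan is to realize $T$ as a metric quotient of $A$ built directly out of $f$, and then to extract the tree structure from the hypotheses. Define a pseudometric on $A$ by
\[
d_T(a,a') := \inf_{\gamma}\length{M}(f \circ \gamma),
\]
where $\gamma$ ranges over Lipschitz paths in $A$ joining $a$ to $a'$. Quasi-convexity with constant $C$ supplies such paths of total length at most $C\,d_A(a,a')$, whence $d_T(a,a') \leq C\Lip(f)\,d_A(a,a')$. Let $T$ be the metric quotient of $A$ under $a \sim a' \Leftrightarrow d_T(a,a')=0$, with quotient map $\psi\colon A \to T$; the preceding bound shows $\psi$ is $C\Lip(f)$-Lipschitz. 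For any path $\gamma$ from $a$ to $a'$ one has $d_M(f(a),f(a')) \leq \length{M}(f\circ\gamma)$, so $d_M(f(a),f(a')) \leq d_T(a,a')$; in particular $a\sim a'$ forces $f(a)=f(a')$, and the rule $\varphi([a]):=f(a)$ defines a well-defined 1-Lipschitz map $\varphi\colon T \to M$ satisfying $f=\varphi\circ\psi$.

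The real content is to verify that $T$ is a metric tree. Take Lipschitz paths $\gamma_1,\gamma_2$ in $A$ with matching endpoints. Their concatenation (with $\gamma_2$ reversed) is a Lipschitz loop in $A$, which by $\pilip{1}(A)=0$ bounds a Lipschitz disk $D\colon I\times I \to A$. Then $f\circ D$ is a Lipschitz map from the square into $M$, and by purely 2-unrectifiability of $M$ its image has $\hm{2}$-measure zero. A slicing argument via Eilenberg's inequality, applied to the 1-Lipschitz function $(s,t)\mapsto d_T\bigl(\gamma_1(0),D(s,t)\bigr)$ on $I\times I$, then forces almost every level set to have $\hm{1}$-measure zero, which in turn forces $\psi\circ\gamma_1$ and $\psi\circ\gamma_2$ to trace the same image in $T$ and yields uniqueness of arcs. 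Existence of geodesics in $T$ follows by extracting an Arzel\`{a}-Ascoli limit of a length-minimizing sequence of paths in $A$ and projecting down via $\psi$; together with 0-hyperbolicity, this makes $T$ a metric tree.

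The principal obstacle is the last link: converting the measure-theoretic smallness $\hm{2}(\Ima(f\circ D))=0$ into the combinatorial rigidity that $T$ contains no embedded loops. This is where purely 2-unrectifiability is used in essential strength and where Eilenberg's inequality (or an equivalent coarea/slicing argument) is needed to translate ``the pushforward of the Lipschitz 2-disk is 1-dimensional in $M$'' into ``the induced arc structure in $T$ is 1-dimensional,'' so that every null-homotopic loop in $A$ collapses to a backtracking arc in the quotient.
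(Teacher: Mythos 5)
This theorem is not proved in the paper at all: it is imported verbatim as Theorem 5 of \cite{Weg}, and the paper only recalls the construction of $T$, $\psi$, and $\varphi$ from Wenger and Young's proof. Your first paragraph reproduces that construction correctly (the pseudometric $d_f$, the quotient, the bound $\Lip(\psi)\leq C\Lip(f)$ from quasi-convexity, and the well-definedness and $1$-Lipschitzness of $\varphi$), so up to that point you agree with what the paper records.

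The remainder, however, has a genuine gap at exactly the step that carries all the content, and you acknowledge as much. First, Eilenberg's inequality applied to the $1$-Lipschitz function $(s,t)\mapsto d_T(\gamma_1(0),D(s,t))$ \emph{on the square} bounds $\int^*\hm{1}(u^{-1}(r))\,dr$ by a multiple of $\hm{2}(I\times I)=1$, not by zero; so as stated it forces nothing about the level sets. The null set supplied by pure $2$-unrectifiability is $\Ima(f\circ D)\subset M$, so the slicing must be performed in $M$ --- and transferring the conclusion back to $T$ is nontrivial, because $\varphi$ is $1$-Lipschitz in the direction that makes $\hm{2}$ \emph{decrease} under $\varphi$, so nullity of $\hm{2}(\Ima(f\circ D))$ in $M$ does not immediately yield nullity of $\hm{2}(\Ima(\psi\circ D))$ in $T$. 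Second, the implication ``almost every level set is $\hm{1}$-null $\Rightarrow$ $\psi\circ\gamma_1$ and $\psi\circ\gamma_2$ trace the same arc'' is asserted rather than argued; this is precisely the rigidity statement the theorem needs, and it is where Wenger and Young do real work. Third, your existence argument for geodesics in $T$ via Arzel\`{a}--Ascoli on a length-minimizing sequence of paths in $A$ fails without properness or compactness of $A$, which is not assumed; and $0$-hyperbolicity of $(T,d_T)$ is invoked in the last sentence but never established. So the proposal is a correct account of the construction and a reasonable roadmap, but it does not constitute a proof of the factorization theorem.
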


%

 We include some details of their work presently. In the proof of Theorem~\ref{Wenger and Young} in \cite{Weg}, Wenger and Young define the following pseudo-metric on $A$: 
\[
d_f(a,a')\deq\inf\{\length{M}(f\circ c)~|~c\text{ is a Lipschitz path in }A\text{ from }a\text{ to }a'\}
\]
where $a,a'\in A$. The metric tree is then defined as a quotient space $T\deq A/\sim$, where the equivalence relation is given by $a\sim a'$ if and only if $d_f(a,a')=0$. The metric on $T$ is then
\[
d_T([a],[a'])\deq d_f(a,a').
\]
The map $\psi$ is the quotient map, $\psi(a)=[a]$. The original function $f$ is constant on equivalence classes. As such, the map $\varphi([a])=f(a)$ is well-defined.

\section{Length minimizers of homotopy classes in purely 2-unrectifiable metric spaces}

%

\subsection{Building a desirable homotopy}

For the remainder of the paper, let $M$ be a purely 2-unrectifiable metric space with metric $d$. Let $\gamma$ and $\beta$ be Lipschitz homotopic paths. 

We use Theorem~\ref{Wenger and Young} and the definition of the metric tree to fashion a desirable Lipschitz homotopy from the path $\gamma$ to a Lipschitz path $\beta'$ whose length is less than or equal to the length of $\beta$ and whose Lipschitz constant is bounded by the Lipschitz constant of $\gamma$. Furthermore, the desirable homotopy will have Lipschitz constant equal to the Lipschitz constant of $\gamma$. This homotopy will be used to show the existence of a length minimizer in each homotopy class in a purely 2-unrectifiable metric space.

 Let $H:I\times I\rightarrow M$ be a homotopy from $\gamma$ to $\beta$. So, $H|_{I\times\{0\}}=\gamma$ and $H|_{I\times\{1\}}=\beta$. Since $I\times I$ is a geodesic space and Lipschitz simply connected, Theorem~\ref{Wenger and Young} guarantees the Lipschitz map $H$ factors through a metric tree $T$:
\begin{center}
\begin{tikzcd}
{I\times I}\arrow[rr, "H"] \arrow[dr, "\psi"'] && M,  \\ 

& T \arrow[ur, "\varphi"']&
\end{tikzcd}
\end{center}
where $\Lip(\psi)=\lip{H}$ and $\Lip(\varphi)=1$. Note that, since $H|_{\{0\}\times I}=\gamma(0)$ and $H|_{\{1\}\times I}=\gamma(1)$, the restricted maps $\psi|_{\{0\}\times I}=\psi(0,0)$ and $\psi|_{\{1\}\times I}=\psi(1,0)$ are constant. 

Though the map $\psi$ is $\Lip(H)$-Lipschitz, the restriction of the map $\psi$ to $I\times\{0\}$ is at most $\Lip(\gamma)$-Lipschitz:

\begin{lemma}\label{Restriction is path Lipschitz}
$\Lip\left(\psi|_{I\times\{0\}}\right)\leq\Lip(\gamma)$.   
\end{lemma}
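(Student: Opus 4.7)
The plan is to unpack the definition of the metric on $T$ and use the straight-line path in $I \times I$ from $(s,0)$ to $(s',0)$ as a competitor in the infimum defining $d_f$.

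First, I would fix two points $(s,0), (s',0) \in I \times \{0\}$ and recall from the construction recounted just before Theorem~\ref{Wenger and Young} that
\[
d_T\bigl(\psi(s,0),\psi(s',0)\bigr)=d_H\bigl((s,0),(s',0)\bigr)=\inf\{\length{M}(H\circ c)\},
\]
where the infimum is taken over Lipschitz paths $c$ in $I\times I$ from $(s,0)$ to $(s',0)$. Since this is an infimum, it suffices to exhibit a single convenient competitor.

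Next, I would take $c$ to be the horizontal segment along $I\times\{0\}$, namely $c(t)=((1-t)s+ts',0)$ for $t\in I$. Then $H\circ c$ is a reparametrization of $\gamma|_{[s,s']}$ (assuming without loss of generality $s\le s'$), because $H|_{I\times\{0\}}=\gamma$. Consequently
\[
\length{M}(H\circ c)=\length{M}\bigl(\gamma|_{[s,s']}\bigr)\le\Lip(\gamma)\,|s'-s|,
\]
where the inequality uses the basic fact that a Lipschitz path has length at most its Lipschitz constant times the length of its domain.

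Combining the two displays yields $d_T(\psi(s,0),\psi(s',0))\le\Lip(\gamma)\,|s-s'|$, and since $(s,0),(s',0)$ were arbitrary this gives $\Lip(\psi|_{I\times\{0\}})\le\Lip(\gamma)$. I do not anticipate any real obstacle here; the only subtlety is noticing that the metric $d_T$ is defined as an infimum over \emph{all} Lipschitz connecting paths in $I\times I$, which lets us pick the horizontal segment and read the bound off of $\gamma$ directly rather than off of the full homotopy $H$.
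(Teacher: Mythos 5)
Your proposal is correct and follows the same route as the paper: unpack $d_T$ as the infimum $d_H$, take the horizontal inclusion of $[s,s']$ into $I\times\{0\}$ as the competitor path, and bound $\length{M}(\gamma|_{[s,s']})$ by $\Lip(\gamma)\,|s'-s|$. No differences worth noting.
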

\begin{proof}
Let $t,t'\in I$ where $t<t'$. Using the definition of the metric on $T$, 
\begin{eqnarray*}
d_T(\psi(t,0),\psi(t',0)) & = & d_T([(t,0)],[(t',0)]) \\
						  & = & d_H((t,0),(t',0)) \\
						  & = & \inf\{\length{M}(H\circ c)~|~c\text{ is a path from }(t,0)\text{ to }(t',0)\}.
\end{eqnarray*}
Now, selecting the inclusion $c=(\id,0):[t,t']\hookrightarrow I\times I$ which is a Lipschitz path from $(t,0)$ to $(t',0)$, yields that 
\[
d_T(\psi(t,0),\psi(t',0))\leq\length{M}\left(H\circ(\id,0):[t,t']\hookrightarrow M\right)=\length{M}(\gamma|_{[t,t']}).
\]
Since $\gamma$ is Lipschitz, we have the following string of inequalities:
\begin{eqnarray*}
d_T(\psi(t,0),\psi(t',0)) & \leq  & \length{M}(\gamma|_{[t,t']}) \\
						  & \leq & \Lip(\gamma)~|t-t'| .
\end{eqnarray*}

\end{proof}


When defining the new homotopy, our focus in the metric tree $T$ will be $T'\deq\Ima(\psi|_{I\times\{0\}})$, the image of the restriction in Lemma~\ref{Restriction is path Lipschitz}, which is a subtree of $T$. Note that every element of the subtree $T'$ can be written as $[(t,0)]$ for some $t\in I$. The subtree $T'$ has finite diameter bounded by the Lipschitz constant of the path $\gamma$, as is now shown.


\begin{lemma}\label{finite diameter of subtree}
$\diam(T')\leq\Lip(\gamma)$.
\end{lemma}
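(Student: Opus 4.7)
The plan is to reduce the statement directly to the preceding Lemma~\ref{Restriction is path Lipschitz}. Every element of $T'$ is, by definition, of the form $\psi(t,0)$ for some $t \in I$, so an arbitrary pair of points in $T'$ can be written $\psi(t,0)$ and $\psi(t',0)$ with $t,t' \in I$.

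First I would apply Lemma~\ref{Restriction is path Lipschitz} to bound $d_T(\psi(t,0),\psi(t',0)) \leq \Lip(\gamma)\,|t-t'|$. Then I would use the fact that $I = [0,1]$ has diameter $1$ to conclude $|t-t'| \leq 1$, giving $d_T(\psi(t,0),\psi(t',0)) \leq \Lip(\gamma)$. Taking the supremum over all such pairs yields $\diam(T') \leq \Lip(\gamma)$.

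There is no real obstacle here; the lemma is essentially a corollary of the previous one, packaged so it can be used later when the homotopy is reconstructed from paths in the subtree $T'$. The only thing to note is that $\diam$ on $T'$ is computed with respect to $d_T$ restricted to $T'$, which agrees with the ambient metric since $T'$ is a subtree.
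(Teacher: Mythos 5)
Your proof is correct and is essentially the paper's argument: the paper simply re-expands the computation of Lemma~\ref{Restriction is path Lipschitz} from the definition of $d_T$ rather than citing it, and then concludes with the same bound $\Lip(\gamma)\,|t-t'|\leq\Lip(\gamma)$. Citing the earlier lemma directly, as you do, is a clean and valid shortcut.
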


\begin{proof}
\begin{eqnarray*}
\diam(T') & = & \sup_{[(t,0)],[(t',0)]\in T'}~d_T\left([(t,0)],[(t',0)]\right) \\
		  & = & \sup_{t,t'\in I}~d_H((t,0),(t',0)) \\
		  & = & \sup_{t,t'\in I}~\inf_{c}~\length{M}(H\circ c) \\
		  & \leq & \sup_{t,t'\in I}~\length{M}\left(H\circ(\id,0):[t,t']\rightarrow M\right) \\
		  & = & \sup_{t,t'\in I}~\length{M}\left(\gamma|_{[t,t']}\right) \\
                      & \leq &  \sup_{t,t'\in I}~ \Lip(\gamma)~|t-t'|  \\
		  & = & \Lip(\gamma).
\end{eqnarray*}
\end{proof}

Now, let $\eta:I\rightarrow T$ be the geodesic in $T$ from $\psi(0,0)$ to $\psi(1,0)$. Since $\eta$ is a geodesic, for any $t,t'\in I$,
\begin{equation}\label{gamma geodesic property}
d_T(\eta(t), \eta(t'))=d_T(\psi(0,0), \psi(1,0)) \left|t-t'\right|.
\end{equation}

Define a new path $\beta':I\rightarrow M$ by $\beta'(t)=\varphi\circ\eta(t)$. As will now be shown, the length of $\beta'$ is bounded above by the length of the path $\beta$, the image of $\beta'$ is a subset of the image of $\gamma$, and the Lipschitz constant for $\beta'$ is bounded above by the Lipschitz constant of the initial path $\gamma$.

\begin{lemma}\label{length of beta prime}
$\length{M}(\beta')\leq\length{M}(\beta)$.
\end{lemma}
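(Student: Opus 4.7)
The plan is to chain three inequalities, each following directly from the setup already established in the excerpt. First I would unpack $\beta'=\varphi\circ\eta$. Since Theorem~\ref{Wenger and Young} gives $\Lip(\varphi)=1$, the path $\beta'$ has length at most the length of $\eta$ in $T$. Because $\eta$ is the geodesic in $T$ from $\psi(0,0)$ to $\psi(1,0)$ parametrized on $I$, applying the geodesic identity (\ref{gamma geodesic property}) with $t=0,\,t'=1$ gives $\length{T}(\eta)=d_T(\psi(0,0),\psi(1,0))$.

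Next I would exploit the fact that the homotopy $H$ is constant on the left and right sides of $I\times I$, namely $H|_{\{0\}\times I}=\gamma(0)$ and $H|_{\{1\}\times I}=\gamma(1)$. This forces $d_H((0,0),(0,1))=0$ and $d_H((1,0),(1,1))=0$, so $\psi(0,0)=\psi(0,1)$ and $\psi(1,0)=\psi(1,1)$ in the quotient tree $T$. Hence
\[
d_T(\psi(0,0),\psi(1,0)) \;=\; d_T(\psi(0,1),\psi(1,1)).
\]

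Finally I would bound this distance by $\length{M}(\beta)$ using the defining infimum for $d_T$, following exactly the strategy used in Lemma~\ref{Restriction is path Lipschitz} but on the top edge instead of the bottom one. Namely, take the Lipschitz path $c=(\id,1):I\hookrightarrow I\times I$ from $(0,1)$ to $(1,1)$ as a test path in the infimum defining $d_H$; since $H\circ(\id,1)=\beta$, this yields
\[
d_T(\psi(0,1),\psi(1,1)) \;=\; d_H((0,1),(1,1)) \;\leq\; \length{M}(H\circ (\id,1)) \;=\; \length{M}(\beta).
\]
Stringing the three steps together produces $\length{M}(\beta')\leq\length{M}(\beta)$. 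There is no real obstacle here: the only conceptual point is noticing that the vertical sides of $I\times I$ collapse in $T$, so the geodesic $\eta$ between the images of the bottom corners is competing against the top-edge path $\psi(\cdot,1)$, and the $1$-Lipschitz map $\varphi$ then transports the inequality back to $M$.
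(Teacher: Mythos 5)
Your proof is correct, and it takes a genuinely different route from the paper's. You argue purely metrically: $\length{M}(\beta')\leq\length{T}(\eta)$ because $\Lip(\varphi)=1$; the geodesic $\eta$ realizes $d_T(\psi(0,0),\psi(1,0))$; the vertical edges of $I\times I$ collapse in $T$, so this equals $d_T(\psi(0,1),\psi(1,1))=d_H((0,1),(1,1))$; and the top edge $(\id,1)$ is an admissible competitor in the infimum defining $d_H$, giving the bound $\length{M}(\beta)$. Each step checks out against the setup (in particular, the paper has already observed that $\psi|_{\{0\}\times I}$ and $\psi|_{\{1\}\times I}$ are constant, which is exactly your collapsing claim). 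The paper instead runs a partition-matching argument: for every partition $0=t_0<\dots<t_{n+1}=1$ it produces times $t_i^*$ (the last visit of $\psi|_{I\times\{1\}}$ to $\eta(t_i)$) with $\beta'(t_i)=\beta(t_i^*)$ and $t_i^*$ increasing, so every approximating sum for $\length{M}(\beta')$ is an approximating sum for $\length{M}(\beta)$. Your version is shorter and avoids the injectivity/last-visit bookkeeping entirely, at the cost of yielding only the length inequality; the paper's version additionally exhibits $\beta'$ as tracing a monotone selection of points of $\beta$, which is structural information in the same spirit as Lemma~\ref{new path has image in initial path}. For the statement as given, your argument is fully sufficient.
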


\begin{proof}
If $\psi(0,0)=\psi(1,0)$, then the geodesic $\eta$ is a constant path, as is the path $\beta'$. The desired result is then immediate.

Assume $\psi(0,0)\neq\psi(1,0)$. Then the geodesic $\eta$ joining these distinct points is non-constant and injective. Let $0=t_0<t_1<t_2<\ldots<t_{n+1}=1$ be a partition of the interval $I$. Then, as will be argued below, there is a partition $0=t_0^*<t_1^*<t_2^*<\ldots<t_{n+1}^*=1$ such that $\beta'(t_i)=\beta(t_i^*)$. 

Now $\eta$ is a geodesic from $\psi(0,0)=\psi(0,1)$ to $\psi(1,0)=\psi(1,1)$ and the map $\psi|_{I\times\{1\}}$ is a path in the metric tree $T$ with the same initial and terminal points as $\eta$. Thus, the image of $\eta$ is a subset of the image of $\psi|_{I\times\{1\}}$. Furthermore, for each $i=1,\ldots,n$, there is a time $t_i^*\in I$ such that
\[
\psi(t_i^*,1)=\eta(t_i)\text{ and }\psi(t,1)\neq\eta(t_i)\text{ for all }t>t_i^*,
\]
that is, $t_i^*$ is the last time the path $\psi|_{I\times\{1\}}$ visits the point $\eta(t_i)$. Thus,
\[
\beta'(t_i)=\varphi(\eta(t_i))=\varphi(\psi(t_i^*,1))=\beta(t_i^*).
\]

Let $i<j$. Suppose $t_i^*\geq t_j^*$. If $t_i^*=t_j^*$, then $\eta(t_i)=\eta(t_j)$, contradicting that the geodesic $\eta$ is injective. Assume $t_i^*>t_j^*$. Then, the resticted path $\psi|_{[t_i^*,1]\times\{1\}}$ begins at $\eta(t_i)$ and ends at $\eta(1)$ and therefore travels through the point $\eta(t_j)$, contradicting that $t_j^*$ is the last time that $\psi|_{I\times\{1\}}$ visits that point $\eta(t_j)$. Therefore, $t_i^*<t_j^*$ for all $i<j$. We thus have attained the desired partition. 

So, for each partition $0=t_0<t_1<t_2<\ldots<t_{n+1}=1$, there exists a partition $0=t_0^*<t_1^*<t_2^*<\ldots<t_{n+1}^*=1$ such that
\[
\sum_{i=0}^{n+1}d\left(\beta'(t_i),\beta'(t_{i+1})\right)=\sum_{i=0}^{n+1}d\left(\beta(t_i^*),\beta(t_{i+1}^*)\right)
\] 
Taking supremum over all  partitions $0=t_0<t_1<t_2<\ldots<t_{n+1}=1$, we arrive at $\length{M}(\beta')\leq\length{M}(\beta)$.
\end{proof}

\begin{lemma}\label{new path has image in initial path} 
$\Ima(\beta')\subset\Ima(\gamma)$.
\end{lemma}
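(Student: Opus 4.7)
The plan is to exploit the fact that $T'=\Ima(\psi|_{I\times\{0\}})$ is a subtree of $T$ containing both endpoints of the geodesic $\eta$, and then use the factorization $H=\varphi\circ\psi$ to translate the inclusion on $T$ back down to $M$.

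First, I would observe that $T'$ is a subtree of $T$ that contains $\psi(0,0)$ and $\psi(1,0)$: indeed, $\psi(0,0)=\psi|_{I\times\{0\}}(0)\in T'$ and $\psi(1,0)=\psi|_{I\times\{0\}}(1)\in T'$. Since $T$ is a metric tree, there is a \emph{unique} arc in $T$ joining $\psi(0,0)$ to $\psi(1,0)$, and the geodesic $\eta$ parametrizes precisely this arc. The subtree $T'$ also contains an arc between these two points (the image of a path in $T'$ must contain an arc between its endpoints). By uniqueness of arcs in metric trees, the image of $\eta$ must be contained in $T'$.

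Next, I would unwind what membership in $T'$ means. By definition of $T'$, for every $t\in I$ there exists some $s=s(t)\in I$ such that $\eta(t)=\psi(s,0)=[(s,0)]$. Applying $\varphi$ and using the factorization $H=\varphi\circ\psi$, we obtain
\[
\beta'(t)=\varphi(\eta(t))=\varphi(\psi(s,0))=H(s,0)=\gamma(s),
\]
since $H|_{I\times\{0\}}=\gamma$. This shows that $\beta'(t)\in\Ima(\gamma)$ for every $t\in I$, which is the desired inclusion.

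The only subtle step is justifying that the geodesic arc $\eta$ sits inside the subtree $T'$; everything else is bookkeeping with the factorization. This is immediate from the defining property of metric trees (unique arc between any two points) applied to the subtree $T'$, so no substantial obstacle is anticipated.
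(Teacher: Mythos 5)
Your argument is correct and essentially identical to the paper's: both reduce to the fact that in a metric tree the image of the geodesic $\eta$ is contained in the image of any path (here $\psi|_{I\times\{0\}}$, i.e.\ the subtree $T'$) joining the same endpoints, and then push this inclusion down to $M$ via $\varphi$ using the factorization $H=\varphi\circ\psi$.
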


\begin{proof}
The geodesic $\eta$ is a path from $\psi(0,0)$ to $\psi(1,0)$, as is the map $\psi|_{I\times\{0\}}$. Since $\eta$ is a geodesic in the metric tree $T$, the image of the geodesic is a subset of the image of any path with the same initial and terminal points. Thus, $\Ima(\eta)\subset\Ima(\psi|_{I\times\{0\}})$. Therefore,
\[
\Ima(\beta')=\Ima(\varphi\circ\eta)\subset\Ima(\varphi\circ\psi|_{I\times\{0\}})=\Ima(\gamma).
\]
\end{proof}

\begin{lemma}\label{lipschitz constant of beta prime}
$\Lip(\beta')\leq\Lip(\gamma)$.
\end{lemma}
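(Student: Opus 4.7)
The plan is to chain together three easy Lipschitz bounds. Since $\beta' = \varphi \circ \eta$, I will use the composition property
\[
\Lip(\beta') \leq \Lip(\varphi) \cdot \Lip(\eta),
\]
and then bound each factor separately. The factor $\Lip(\varphi) = 1$ is given immediately by Theorem~\ref{Wenger and Young}, so everything reduces to controlling $\Lip(\eta)$.

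For $\Lip(\eta)$, I would invoke the geodesic property (\ref{gamma geodesic property}): since $\eta$ satisfies $d_T(\eta(t), \eta(t')) = d_T(\psi(0,0), \psi(1,0))\,|t-t'|$ for all $t, t' \in I$, its Lipschitz constant equals exactly $d_T(\psi(0,0), \psi(1,0))$. Thus the problem reduces to bounding the $T$-distance between the two endpoints of $\eta$.

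Finally, I would handle this last bound by applying Lemma~\ref{Restriction is path Lipschitz} to the pair of points $(0,0)$ and $(1,0)$ in $I \times \{0\}$: since $\psi|_{I \times \{0\}}$ is $\Lip(\gamma)$-Lipschitz,
\[
d_T(\psi(0,0), \psi(1,0)) \leq \Lip(\gamma) \cdot |0 - 1| = \Lip(\gamma).
\]
Combining the three estimates gives $\Lip(\beta') \leq 1 \cdot \Lip(\gamma) = \Lip(\gamma)$, as required.

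I do not expect any real obstacle here: the hard work has already been pushed into Theorem~\ref{Wenger and Young} (to get the 1-Lipschitz factor $\varphi$) and Lemma~\ref{Restriction is path Lipschitz} (to transfer the Lipschitz control from $\gamma$ to $\psi|_{I \times \{0\}}$). The only subtle point worth a sentence of care is verifying that the geodesic $\eta$, which lives in the ambient tree $T$ rather than in $T'$, still has its endpoints equal to $\psi(0,0)$ and $\psi(1,0)$ by construction, so that Lemma~\ref{Restriction is path Lipschitz} applies directly to bound $d_T(\eta(0), \eta(1))$.
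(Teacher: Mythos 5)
Your proposal is correct and follows essentially the same chain of inequalities as the paper: compose the $1$-Lipschitz map $\varphi$ with the geodesic $\eta$, use (\ref{gamma geodesic property}) to reduce to bounding $d_T(\psi(0,0),\psi(1,0))$, and bound that by $\Lip(\gamma)$. The only (immaterial) difference is that the paper routes the final bound through $\diam(T')$ via Lemma~\ref{finite diameter of subtree}, whereas you apply Lemma~\ref{Restriction is path Lipschitz} directly to the endpoint pair; both rest on the same underlying computation.
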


\begin{proof}
Let $t, t'\in I$. Using that $\Lip(\varphi)=1$ as well as (\ref{gamma geodesic property}) and \text{Lemma}~\ref{finite diameter of subtree}, we have the following inequalities:
\begin{eqnarray*}
d(\beta'(t), \beta'(t')) & = & d(\varphi(\eta(t)), \varphi(\eta(t')))  \\
					& \leq & d_T(\eta(t)), \eta(t'))  \\
					& = & d_T(\psi(0,0), \psi(1,0))\left| t-t'\right|  \\
					& \leq & \diam(T')\left|t-t'\right|  \\
					& \leq & \Lip(\gamma)\left|t-t'\right|.
\end{eqnarray*}
\end{proof}

We will now construct a homotopy $H'$ from the initial path $\gamma$ to the new path $\beta'$ which has Lipschitz constant $\Lip(H')=\Lip(\gamma)$. 

Let $t\in I$. There is a geodesic $g_t:I\rightarrow T$ from $g_t(0)=\psi(t,0)$ to $g_t(1)=\eta(t)$ where, for all $s, s'\in I$,
\begin{equation}\label{g_t geodesic property}
d_T(g_t(s), g_t(s')) = d_T(\psi(t,0), \eta(t))\left|s-s'\right|.
\end{equation}

Since points $\psi(0,0)=\eta(0)$ are equal, the geodesic $g_0(s)=\psi(0,0)$ is constant. Similarly, the geodesic $g_1(s)=\psi(1,0)$ is constant. Thus, the function $g:I\times I\rightarrow T$ given by $g(t,s)\deq g_t(s)$ is a homotopy from path $\psi|_{I\times\{0\}}$ to geodesic $\eta$ provided $g$ is Lipschitz. We show that $g$ is a Lipschitz map with Lipschitz constant bounded by $\Lip(\gamma)$ and that the image of $g$ is a subset of the image $\Ima(\psi|_{I\times\{0\}})$.

\begin{lemma}\label{Lipschitz constant of the geodesic homotopy}
$\Lip(g)\leq\Lip(\gamma).$
\end{lemma}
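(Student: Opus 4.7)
The plan is to show that for arbitrary $(t,s), (t',s') \in I\times I$ we have $d_T(g(t,s), g(t',s')) \leq \Lip(\gamma)(|t-t'| + |s-s'|)$, which is exactly the required Lipschitz bound with respect to the $L^1$ metric $d^1$ on $I\times I$. First I would apply the triangle inequality to split
\[
d_T(g(t,s), g(t',s')) \leq d_T(g_t(s), g_t(s')) + d_T(g_t(s'), g_{t'}(s'))
\]
and handle the two pieces separately, bounding the first by $\Lip(\gamma)|s-s'|$ and the second by $\Lip(\gamma)|t-t'|$.

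For the first piece, the geodesic property (\ref{g_t geodesic property}) of $g_t$ gives $d_T(g_t(s), g_t(s')) = d_T(\psi(t,0), \eta(t))|s-s'|$. Next I would observe that $\eta$ lies entirely in the subtree $T'$: its endpoints $\psi(0,0)$ and $\psi(1,0)$ are in $T'$, and since $T'$ is itself a metric tree under the restricted metric, the geodesic in $T'$ joining these endpoints has the same length $d_T(\psi(0,0),\psi(1,0))$ as $\eta$ and so, by uniqueness of geodesics in $T$, must coincide with $\eta$. Thus both $\psi(t,0)$ and $\eta(t)$ lie in $T'$, and Lemma~\ref{finite diameter of subtree} yields $d_T(\psi(t,0), \eta(t)) \leq \diam(T') \leq \Lip(\gamma)$.

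For the second piece, I would invoke the convexity of the distance between two geodesics parametrized on $[0,1]$, a standard fact in any CAT(0) space and in particular in a metric tree. Applied to $g_t$ and $g_{t'}$ at parameter $s'$, this yields
\[
d_T(g_t(s'), g_{t'}(s')) \leq (1-s')\, d_T(\psi(t,0), \psi(t',0)) + s'\, d_T(\eta(t), \eta(t')).
\]
Lemma~\ref{Restriction is path Lipschitz} controls the first endpoint distance by $\Lip(\gamma)|t-t'|$, while equation (\ref{gamma geodesic property}) combined with Lemma~\ref{finite diameter of subtree} controls the second by $d_T(\psi(0,0), \psi(1,0))|t-t'| \leq \Lip(\gamma)|t-t'|$. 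The convex combination is therefore at most $\Lip(\gamma)|t-t'|$, and adding this to the bound on the first piece gives the conclusion.

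The main (and essentially only) non-trivial ingredient is the convexity inequality for the distance between two geodesics. If a self-contained argument is preferred over citing CAT(0) theory, it can be obtained directly from the tree structure: the four points $\psi(t,0), \psi(t',0), \eta(t), \eta(t')$ span a finite subtree that is topologically a segment, a $Y$, or an $H$, and on each such configuration the inequality reduces to an elementary case analysis based on the position of the median point(s).
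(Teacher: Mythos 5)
Your proposal is correct and follows essentially the same route as the paper: the same triangle-inequality split of $d_T(g_t(s),g_{t'}(s'))$ into an $s$-direction and a $t$-direction piece, the same use of the geodesic property of $g_t$ together with $\eta(t)\in T'$ and $\diam(T')\leq\Lip(\gamma)$ for one piece, and a comparison of $d_T(g_t(\cdot),g_{t'}(\cdot))$ against its values at $s=0$ and $s=1$ for the other. The only cosmetic difference is that you justify that endpoint comparison via CAT(0) convexity of the distance between two geodesics, whereas the paper uses the equivalent tree fact that this distance is first decreasing and then increasing, hence maximized at an endpoint; both then reduce to the same bounds via Lemma~\ref{Restriction is path Lipschitz}, equation~(\ref{gamma geodesic property}), and Lemma~\ref{finite diameter of subtree}.
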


\begin{proof}
Let $(t,s), (t',s')\in I\times I$. 

First, consider $d_T(g_t(s), g_{t'}(s))$. Fix $t$ and $t'$. As $s\in I$ varies, 
\[
D(s)\deq d_T(g_t(s), g_{t'}(s))
\] 
is a function from $I$ to $\R$. By properties of metric trees, there exists $s_0\in I$ such that the restriction $D|_{[0,s_0]}$ is decreasing and the restriction $D|_{[s_0,1]}$ is increasing. 
Thus, the maximum of the function $D$ occurs when $s=0$ or $s=1$. Now, by Lemma~\ref{Restriction is path Lipschitz},
\begin{eqnarray*}
D(0) ~ = ~ d_T(g_t(0), g_{t'}(0)) & = & d_T(\psi(t,0), \psi(t',0)) \\
						      & \leq & \Lip(\gamma)\left|t-t'\right|. 
\end{eqnarray*}
Also, by (\ref{gamma geodesic property}) and Lemma~\ref{finite diameter of subtree},
\begin{eqnarray*}
D(1) ~ = ~ d_T(g_t(1), g_{t'}(1)) & = & d_T(\eta(t), \eta(t')) \\
							      & = & d_T(\eta(0), \eta(1))\left|t-t'\right| \\
							      & \leq & \diam(T')\left|t-t'\right| \\
							      & \leq & \Lip(\gamma)\left|t-t'\right|.
\end{eqnarray*}
Therefore, for any $s\in I$,
\[
d_T(g_t(s), g_{t'}(s)) = D(s) \leq \Lip(\gamma)\left|t-t'\right|.
\]

Now, consider the value $d_T(g_t(s), g_{t}(s'))$. Since $\eta$ is a geodesic from $\psi(0,0)$ to $\psi(1,0)$ and $T'=\Ima(\psi|_{I\times\{0\}})\subset T$ is a subtree containing these points, $\eta(t)\in T'$. Thus, by (\ref{g_t geodesic property}) and Lemma~\ref{finite diameter of subtree},
\begin{eqnarray*}
d_T(g_t(s), g_t(s')) & = & d_T(\psi(t,0), \eta(t))\left|s-s'\right| \\
				    & \leq & \diam(T')\left|s-s'\right| \\
				    & \leq & \Lip(\gamma)\left|s-s'\right|.
\end{eqnarray*}

Therefore, we conclude that
\begin{eqnarray*}
d_T(g_t(s), g_{t'}(s')) & \leq & d_T(g_t(s), g_{t'}(s)) + d_T(g_{t'}(s), g_{t'}(s')) \\
					& \leq & \Lip(\gamma)\left|t-t'\right| + \Lip(\gamma)\left|s-s'\right| \\
					& = & \Lip(\gamma)~d^1((t,s), (t',s')).
\end{eqnarray*}

\end{proof}

\begin{lemma}\label{g maps into T prime} 
$\Ima(g)\subset \Ima(\psi|_{I\times\{0\}})$.
\end{lemma}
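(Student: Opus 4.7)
The plan is to exploit the fact that $T'=\Ima(\psi|_{I\times\{0\}})$ is a subtree of $T$, together with uniqueness of arcs in metric trees, to show that every geodesic $g_t$ lives entirely in $T'$.

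First, I would observe that by the defining property of a metric tree, between any two points there is a unique arc, and that arc coincides with the geodesic joining them. In particular, since $T'$ is itself a metric tree (as a subtree) containing the points $\psi(0,0)$ and $\psi(1,0)$, the unique arc joining these two points within $T$ lies inside $T'$. Since $\eta:I\to T$ is the geodesic from $\psi(0,0)$ to $\psi(1,0)$, its image traces out that unique arc, and therefore $\Ima(\eta)\subset T'$. In particular $\eta(t)\in T'$ for every $t\in I$.

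Next, fix $t\in I$. The geodesic $g_t$ connects $\psi(t,0)\in T'$ to $\eta(t)\in T'$. Applying the same uniqueness-of-arc argument inside the subtree $T'$, the unique arc between $\psi(t,0)$ and $\eta(t)$ in $T$ is contained in $T'$, and this arc is precisely $\Ima(g_t)$. Hence $\Ima(g_t)\subset T'$ for every $t\in I$, so
\[
\Ima(g)=\bigcup_{t\in I}\Ima(g_t)\subset T'=\Ima(\psi|_{I\times\{0\}}),
\]
as desired.

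I do not anticipate a real obstacle here; the entire argument rests on the elementary fact that in a metric tree, the geodesic between two points of a subtree stays in that subtree. The only subtlety is to confirm that $T'$ genuinely is a subtree, but this has already been asserted in the paragraph following Lemma~\ref{Restriction is path Lipschitz}, and it follows from the fact that $\psi|_{I\times\{0\}}$ is a continuous path whose image, viewed inside the metric tree $T$, is closed under passing to geodesics between its points.
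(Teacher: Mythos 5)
Your proof is correct and follows essentially the same route as the paper's: both arguments first place $\eta(t)$ in the subtree $T'$ (since $T'$ contains the endpoints of $\eta$ and geodesics between points of a subtree stay in the subtree), and then apply the same reasoning to each $g_t$. Your added remarks about uniqueness of arcs just make explicit the subtree property the paper invokes implicitly.
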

\begin{proof}
Let $(t,s)\in I\times I$. The path $g_t$ is a geodesic from $\psi(t,0)$ to $\eta(t)$. Since $\eta$ is a geodesic from $\psi(0,0)$ to $\psi(1,0)$ and $T'=\Ima(\psi|_{I\times\{0\}})$ is a subtree containing these points, $\eta(t)\in T'$. Thus, since $g_t$ is a geodesic between points $\psi(t,0), \eta(t)\in T'$ and $T'$ is a subtree, $g_t(s)\in T'$ for all $s\in I$. Thus, $\Ima(g)\subset \Ima(\psi|_{I\times\{0\}})$. 
\end{proof}

We are now ready to define the new homotopy $H':I\times I\rightarrow M$ by $H'(t,s)\deq\varphi\circ g(t,s)$. The function $H'$ is indeed a homotopy from $\gamma$ to $\beta'$ as
\begin{center}
$\begin{array}{ccccccc}
H'(t,0) & = & \varphi(g_t(0)) & = & \varphi(\psi(t,0)) & = & \gamma(t),  \\
H'(t,1) & = & \varphi(g_t(1)) & = & \varphi(\eta(t)) & = & \beta'(t), \\
H'(0,s) & = & \varphi(g_0(s)) & = & \varphi(\psi(0,0)) & = & \gamma(0),  \\
H'(1,s) & = & \varphi(g_1(s)) & = & \varphi(\psi(1,0)) & = & \gamma(1). 
\end{array}$
\end{center}
Moreover, since $\Lip(\varphi)=1$ and, by Lemma~\ref{Lipschitz constant of the geodesic homotopy}, for $(s,t), (s',t')\in I\times I$,
\begin{eqnarray*}
d(H'(t,s), H'(t',s')) & = & d(\varphi(g(t,s)), \varphi(g(t',s'))) \\
				 & \leq & d_T(g(t,s), g(t',s')) \\
				 & \leq & \Lip(\gamma)~d^1((t,s), (t',s')).
\end{eqnarray*}
Therefore, $\Lip(H')\leq\Lip(\gamma)$. In fact, since $\Lip(H'|_{I\times\{0\}})=\Lip(\gamma)$, we have that $\Lip(H')=\Lip(\gamma)$. Also, an immediate consequence of Lemma~\ref{g maps into T prime} is that $\Ima(H')\subset\Ima(\gamma)$.

We have thus defined a Lipschitz homotopy with all of the desired properties, which are collected in the following lemma.

\begin{lemma}\label{Desirable homotopy}
Let $M$ be a purely 2-unrectifiable space. Given Lipschitz paths $\gamma:I\rightarrow M$ and $\beta:I\rightarrow M$ that are  homotopic rel endpoints, there exists a Lipschitz map $H':I\times I\rightarrow M$ and a Lipschitz path $\beta':I\rightarrow M$ such that
\begin{itemize}
\item the map $H'$ is a homotopy from $\gamma$ to $\beta'$,
\item $\Lip(H')=\Lip(\gamma)$,
\item $\Ima(H')\subset\Ima(\gamma)$,
\item $\Lip(\beta')\leq \Lip(\gamma)$,
\item $\Ima(\beta')\subset\Ima(\gamma)$, and
\item $\length{M}(\beta')\leq\length{M}(\beta)$.
\end{itemize}
\end{lemma}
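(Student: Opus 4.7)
The proof plan is to assemble the construction and sub-lemmas already developed in the preceding pages into a single statement. Concretely, given $\gamma$, $\beta$ and any Lipschitz homotopy $H$ between them, I would apply Theorem~\ref{Wenger and Young} to factor $H = \varphi\circ\psi$ through a metric tree $T$ with $\Lip(\varphi)=1$, let $\eta\colon I\to T$ be the unique geodesic from $\psi(0,0)$ to $\psi(1,0)$, define $\beta' := \varphi\circ\eta$, and build the homotopy $H'(t,s) := \varphi\circ g_t(s)$, where $g_t$ is the geodesic in $T$ from $\psi(t,0)$ to $\eta(t)$.

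With $\beta'$ and $H'$ in hand, each of the six bullets in the statement matches a previously established result. The inequality $\length{M}(\beta')\le\length{M}(\beta)$ is Lemma~\ref{length of beta prime}, the inclusion $\Ima(\beta')\subset\Ima(\gamma)$ is Lemma~\ref{new path has image in initial path}, and $\Lip(\beta')\le\Lip(\gamma)$ is Lemma~\ref{lipschitz constant of beta prime}. For the homotopy itself, the four boundary conditions of $H'$ are verified by direct substitution, using that $g_0$ and $g_1$ are constant geodesics because $\psi(0,0)=\eta(0)$ and $\psi(1,0)=\eta(1)$. The image containment $\Ima(H')\subset\Ima(\gamma)$ follows from Lemma~\ref{g maps into T prime} together with $\varphi\circ\psi|_{I\times\{0\}}=\gamma$, and the bound $\Lip(H')\le\Lip(\gamma)$ follows from Lemma~\ref{Lipschitz constant of the geodesic homotopy} combined with $\Lip(\varphi)=1$; equality $\Lip(H')=\Lip(\gamma)$ is then forced, because $H'$ restricts to $\gamma$ on $I\times\{0\}$.

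The only non-formal step — and the real content — is Lemma~\ref{Lipschitz constant of the geodesic homotopy}. The key idea there is that in a metric tree the function $s\mapsto d_T(g_t(s),g_{t'}(s))$ comparing two geodesics at the same parameter turns at most once, so its maximum on $I$ is attained at an endpoint. At $s=0$ it is bounded by $\Lip(\gamma)|t-t'|$ via Lemma~\ref{Restriction is path Lipschitz}, and at $s=1$ it is bounded by $\diam(T')|t-t'|\le\Lip(\gamma)|t-t'|$ via (\ref{gamma geodesic property}) and Lemma~\ref{finite diameter of subtree}; combining this with the vertical bound from (\ref{g_t geodesic property}) and the triangle inequality on the $L^1$ metric of $I\times I$ yields the required Lipschitz estimate. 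I expect this endpoint-maximum property of parallel geodesics in a tree to be the main obstacle, since every other bullet reduces to a quick bookkeeping step once it is in place.
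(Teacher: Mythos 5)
Your proposal matches the paper's own proof: the lemma is exactly the assembly of the preceding construction ($\beta'=\varphi\circ\eta$, $H'=\varphi\circ g$) with the six bullets supplied by Lemmas~\ref{length of beta prime}, \ref{new path has image in initial path}, \ref{lipschitz constant of beta prime}, \ref{Lipschitz constant of the geodesic homotopy}, and \ref{g maps into T prime}, plus the boundary-condition check. You also correctly identify the one nontrivial ingredient, the endpoint-maximum property of $s\mapsto d_T(g_t(s),g_{t'}(s))$ in a metric tree, which is precisely how the paper bounds $\Lip(g)$.
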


\subsection{Finding the length minimizer of a homotopy class}

We now prove the primary result of the paper: the existence of a length minimizer in any homotopy class of paths in a purely 2-unrectifiable metric space. We use Lemma~\ref{Desirable homotopy} to fashion a sequence of Lipschitz paths in a given homotopy class, as well as associated homotopies, that have a uniform bound on their Lipschitz constants and then apply Arzel\`{a}-Ascoli theorem to find the length minimizer. 


\begin{theorem}\label{existence of length minimizer}
Let $M$ be a purely 2-unrectifiable metric space. For any homotopy class $[\gamma]$ of Lipschitz paths in $M$, there exists a length minimizing Lipschitz path $\core{\gamma}\in[\gamma]$ where
\[
\length{M}(\core{\gamma})=\inf\{\length{M}(\gamma)~|~\gamma\in[\gamma]\}.
\]
\end{theorem}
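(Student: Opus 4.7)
The plan is to produce a minimizing sequence whose paths (and witnessing homotopies) are uniformly Lipschitz with images trapped inside a single compact set, and then extract a limit via Arzelà–Ascoli. Concretely, set $L := \inf\{\length{M}(\beta) : \beta \in [\gamma]\}$ and pick any sequence $\beta_n \in [\gamma]$ with $\length{M}(\beta_n) \to L$. Also fix, once and for all, a particular representative $\gamma \in [\gamma]$; this $\gamma$ will supply the uniform Lipschitz and image bounds needed below.

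For each $n$, apply Lemma~\ref{Desirable homotopy} to the homotopic pair $\gamma, \beta_n$. This yields a new Lipschitz path $\beta_n' \in [\gamma]$ and a Lipschitz homotopy $H_n' : I \times I \to M$ from $\gamma$ to $\beta_n'$ satisfying $\Lip(H_n') = \Lip(\gamma)$, $\Ima(H_n') \subset \Ima(\gamma)$, $\Lip(\beta_n') \le \Lip(\gamma)$, $\Ima(\beta_n') \subset \Ima(\gamma)$, and $\length{M}(\beta_n') \le \length{M}(\beta_n)$. Consequently $\length{M}(\beta_n') \to L$, the family $\{H_n'\}$ is equicontinuous, and it takes values in the compact set $\Ima(\gamma)$ (which is compact as the continuous image of $I$).

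Applying the Arzelà–Ascoli theorem to $\{H_n'\}$ produces a subsequence $H_{n_k}'$ converging uniformly to a Lipschitz map $H : I \times I \to M$ with $\Lip(H) \le \Lip(\gamma)$. The boundary identities pass to the uniform limit:
\[
H(t,0) = \gamma(t), \qquad H(0,s) = \gamma(0), \qquad H(1,s) = \gamma(1),
\]
and $H(t,1) = \lim_k \beta_{n_k}'(t) =: \core{\gamma}(t)$. Thus $H$ exhibits $\core{\gamma}$ as a Lipschitz path in $[\gamma]$. Lower semicontinuity of length under uniform convergence then gives
\[
\length{M}(\core{\gamma}) \le \liminf_k \length{M}(\beta_{n_k}') \le \liminf_k \length{M}(\beta_{n_k}) = L,
\]
while $\core{\gamma} \in [\gamma]$ forces $\length{M}(\core{\gamma}) \ge L$, yielding equality.

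The main obstacle is that the naive minimizing sequence $\{\beta_n\}$ carries no uniform Lipschitz or image control, so Arzelà–Ascoli cannot be applied to it directly, especially since $M$ need not be proper. The entire role of Lemma~\ref{Desirable homotopy} in this proof is to launder each $\beta_n$ into a replacement $\beta_n'$ (with an accompanying homotopy $H_n'$) whose Lipschitz constant and image are both bounded in terms of the fixed representative $\gamma$. Once this uniform control is in place, compactness of $\Ima(\gamma)$ takes over and the extraction argument is routine.
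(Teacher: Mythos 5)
Your proof is correct and follows essentially the same route as the paper: normalize a minimizing sequence via Lemma~\ref{Desirable homotopy} to get uniform Lipschitz and image control, then extract a limit by Arzel\`{a}--Ascoli and conclude with lower semicontinuity of length. The only (cosmetic) difference is that you apply Arzel\`{a}--Ascoli once, directly to the homotopies, and read off the limit path as $H(\cdot,1)$, whereas the paper extracts a convergent subsequence of paths first and then a further subsequence of homotopies; your version is a slight streamlining of the same argument.
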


\begin{proof}

Let $M$ be a purely 2-unrectifiable metric space. Let $[\gamma]$ be a homotopy class of Lipschitz paths and define $\lmin\deq\inf\{\length{M}(\gamma)~|~\gamma\in[\gamma]\}$ to be the infimum of all lengths of paths in $[\gamma]$. 

For each natural number $n$, let $\gamma_n\in[\gamma]$ be a Lipschitz path such that $\length{M}(\gamma_n)\leq\lmin+\frac{1}{n}$. Furthermore, since $\gamma_1$ is homotopic rel endpoints to $\gamma_n$, via Lemma~\ref{Desirable homotopy}, we can assume that $\Lip(\gamma_n)\leq\Lip(\gamma_1)$ and $\Ima(\gamma_n)\subset\Ima(\gamma_1)$. Additionally, there is a homotopy $H_n:I\times I\rightarrow M$ from $\gamma_1$ to $\gamma_n$ such that $\Lip(H_n)=\Lip(\gamma_1)$ and $\Ima(H_n)\subset\Ima(\gamma_1)$.

Now, for any $n\in\NN$, $\Lip(\gamma_n)\leq\Lip(\gamma_1)$. Since the images of the paths in the sequence $(\gamma_n)$ are subsets of the compact set $\Ima(\gamma_1)$, by Arzel\`{a}-Ascoli theorem, there exists a subsequence $(\gamma_{n_k})$ that uniformly converges to a Lipschitz path $\core{\gamma}$. By lower semi-continuity of the length measure, $\length{M}(\core{\gamma})\leq\liminf_k\length{M}(\gamma_{n_k})$. In fact, due to how the sequence $(\gamma_n)$ was selected, $\length{M}(\core{\gamma})\leq\lmin$. 

We now want to show that $\core{\gamma}\in[\gamma]$. Associated to the subsequence $(\gamma_{n_k})$, there is a sequence of homotopies $(H_{n_k})$ such that $\Lip(H_{n_k})=\Lip(\gamma_1)$ for each homotopy in the sequence. 
Since $\Ima(H_{n_k})\subset\Ima(\gamma_1)$ for each $n_k$ and $\Ima(\gamma_1)$ is compact, by Arzel\`{a}-Ascoli theorem, there exists a subsequence $(H_{n_{k_j}})$ that converges uniformly to a Lipschitz map $\core{H}:I\times I\rightarrow M$. 

Now, $\core{H}|_{I\times\{0\}}=\gamma_1$ since $H_{n_{k_j}}|_{I\times\{0\}}=\gamma_1$ for all $n_{k_j}$. Also, since the paths $H_{n_{k_j}}|_{I\times\{1\}}=\gamma_{n_{k_j}}$ converge uniformly to $\core{\gamma}$, then $\core{H}|_{I\times\{1\}}=\core{\gamma}$. So, the map $\core{H}$ is a homotopy from $\gamma_1$ to $\core{\gamma}$. Therefore, $\core{\gamma}\in[\gamma]$ and thus $\length{M}(\core{\gamma})=\lmin$.

\end{proof}

\subsection{Consequences of the existence of a length minimizer}

A length minimizer $\core{\gamma}\in[\gamma]$ can be thought of as the core of the homotopy class $[\gamma]$ where the extraneous branches of the paths in the class have been pruned in the sense that the image of $\core{\gamma}$ is a subset of the image of any path contained in $[\gamma]$, as is now shown. A consequence of Theorem~\ref{uniqueness of core} is that a length minimzer for a homotopy class is unique up to reparametrization.

\begin{theorem}\label{uniqueness of core}
Let $M$ be a purely 2-unrectifiable metric space and let $[\gamma]$ be a homotopy class of Lipschitz paths in $M$ with length minimizer $\core{\gamma}\in[\gamma]$. Additionally, assume that the length minimzer $\core{\gamma}$ is arc length parametrized. Let $\gamma:I\rightarrow M$ be a Lipschitz path that is  homotopic rel endpoints to $\core{\gamma}$. Then the Lipschitz path $\core{\gamma}'$ produced by Lemma~\ref{Desirable homotopy} is equal to the length minimizer $\core{\gamma}$. Furthermore, the image of a length minimizer $\core{\gamma}$ is a subset of the image of $\gamma$, that is,
\[
\Ima(\core{\gamma})\subset\Ima(\gamma).
\]
\end{theorem}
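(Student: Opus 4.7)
The plan is to apply Lemma~\ref{Desirable homotopy} with $\core{\gamma}$ in the role of the ``initial'' path and $\gamma$ in the role of $\beta$, obtaining the metric tree $T$, the factorization $\psi\colon I\times I\to T$ and $\varphi\colon T\to M$, the geodesic $\eta\colon I\to T$ from $\psi(0,0)$ to $\psi(1,0)$, and the resulting path $\core{\gamma}'=\varphi\circ\eta$. Set $L:=\length{M}(\core{\gamma})$, so that arc length parametrization of $\core{\gamma}$ gives $\Lip(\core{\gamma})=L$. The heart of the proof is to show that $\psi|_{I\times\{0\}}=\eta$ as parametrized paths, from which $\core{\gamma}=\varphi\circ\psi|_{I\times\{0\}}=\varphi\circ\eta=\core{\gamma}'$ follows immediately.

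To pin $\psi|_{I\times\{0\}}$ down, first I would bound its length two ways. By Lemma~\ref{Restriction is path Lipschitz}, $\Lip(\psi|_{I\times\{0\}})\leq L$ and so $\length{T}(\psi|_{I\times\{0\}})\leq L$; on the other hand, since $\varphi$ is $1$-Lipschitz and $\varphi\circ\psi|_{I\times\{0\}}=\core{\gamma}$ has length $L$, the reverse inequality $\length{T}(\psi|_{I\times\{0\}})\geq L$ holds, giving equality. Next I would show $d_T(\psi(0,0),\psi(1,0))=L$: the upper bound follows from Lemma~\ref{finite diameter of subtree}, and the lower bound from the chain $L\leq\length{M}(\core{\gamma}')\leq\length{T}(\eta)=d_T(\psi(0,0),\psi(1,0))$, where the first inequality uses that $\core{\gamma}'\in[\core{\gamma}]$ (via the homotopy supplied by Lemma~\ref{Desirable homotopy}) together with the length minimizing property of $\core{\gamma}$.

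Because $\psi|_{I\times\{0\}}$ is a path in the tree $T$ whose total length equals the distance between its endpoints, its image must be the unique geodesic arc from $\psi(0,0)$ to $\psi(1,0)$ and it traverses that arc without backtracking. Combined with $\Lip(\psi|_{I\times\{0\}})\leq L$ and $\length{T}(\psi|_{I\times\{0\}})=L$ over a domain of unit length, additivity of length on subintervals saturates every inequality, giving $\length{T}(\psi|_{[t,t']\times\{0\}})=L|t-t'|$; the no-backtracking property then promotes this to $d_T(\psi(t,0),\psi(t',0))=L|t-t'|$, so $\psi|_{I\times\{0\}}$ is a constant-speed geodesic from $\psi(0,0)$ to $\psi(1,0)$. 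Since the arc between two points in a tree is unique and its constant-speed parametrization over $I$ is unique, $\psi|_{I\times\{0\}}=\eta$, and therefore $\core{\gamma}'=\core{\gamma}$.

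The image inclusion $\Ima(\core{\gamma})\subset\Ima(\gamma)$ falls out of the same construction: since $\eta$ and $\psi|_{I\times\{1\}}$ are both paths in $T$ with endpoints $\psi(0,1)=\psi(0,0)$ and $\psi(1,1)=\psi(1,0)$ and $\eta$ is a geodesic, $\Ima(\eta)\subset\Ima(\psi|_{I\times\{1\}})$, so for every $t\in I$ there exists $t^*\in I$ with $\eta(t)=\psi(t^*,1)$, yielding $\core{\gamma}(t)=\core{\gamma}'(t)=\varphi(\eta(t))=\varphi(\psi(t^*,1))=\gamma(t^*)$. The main obstacle is the promotion of $\psi|_{I\times\{0\}}$ from a mere $L$-Lipschitz lift of $\core{\gamma}$ to a constant-speed geodesic equal to $\eta$; arc length parametrization of $\core{\gamma}$ is essential precisely because it is what saturates every Lipschitz and length inequality in the chain, leaving no slack for a different parametrization.
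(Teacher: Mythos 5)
Your proof is correct, and while it shares the paper's overall skeleton---factor the homotopy through a metric tree via Theorem~\ref{Wenger and Young} and show that the lift of $\core{\gamma}$ to the tree is the geodesic between the images of the endpoints---the way you establish that key fact is genuinely different. The paper runs the homotopy from $\gamma$ to $\core{\gamma}$ and, for \emph{every} subinterval $[t,t']$, combines the observation that a restriction of a length minimizer is a length minimizer in its own homotopy class with the explicit definition of $d_T$ as $\inf_c \length{M}(H\circ c)$ to compute $d_T(\psi(t,1),\psi(t',1))=\length{M}(\core{\gamma}|_{[t,t']})=L\,|t'-t|$ directly; arc length parametrization then exhibits $\psi|_{I\times\{1\}}$ as the geodesic at once. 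You instead place $\core{\gamma}$ at $s=0$, invoke minimality only once and globally (to get $d_T(\psi(0,0),\psi(1,0))\geq L$ from $\length{M}(\core{\gamma}')\geq \lmin = L$), and then squeeze: $\Lip(\psi|_{I\times\{0\}})\leq L$ from Lemma~\ref{Restriction is path Lipschitz}, the lower length bound from $1$-Lipschitzness of $\varphi$, and tree geometry to promote the saturated inequalities to the geodesic condition. What your route buys is that it avoids having to justify that $\core{\gamma}|_{[t,t']}$ is itself a minimizer (a step the paper asserts without proof); what it costs is the extra tree-geometric bookkeeping about images of shortest paths and backtracking. Two minor remarks: (i) your swap of the roles of $\gamma$ and $\core{\gamma}$ is harmless, since the tree, the quotient map, and hence the geodesic $\eta$ and the output path $\varphi\circ\eta$ depend only on $H$ and not on which edge of the square is labelled $s=0$, so the path you produce is indeed the $\core{\gamma}'$ of the statement; (ii) the promotion of $\length{M}\bigl(\psi|_{[t,t']\times\{0\}}\bigr)=L|t-t'|$ to $d_T(\psi(t,0),\psi(t',0))=L|t-t'|$ needs no backtracking analysis, as the triangle inequality $L=d_T(\psi(0,0),\psi(1,0))\leq Lt+d_T(\psi(t,0),\psi(t',0))+L(1-t')$ already forces it.
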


\begin{proof} 
Let $H:I\times I\rightarrow M$ be a Lipschitz homotopy from $\gamma$ to $\core{\gamma}$. By Theorem~\ref{Wenger and Young}, the map $H$ factors through a metric tree $T$:
\begin{center}
\begin{tikzcd}
{I\times I}\arrow[rr, "H"] \arrow[dr, "\psi"'] && M.  \\ 

& T \arrow[ur, "\varphi"']&
\end{tikzcd}
\end{center}

We now show that the path $\psi|_{I\times\{1\}}$ in the metric tree $T$ is the geodesic from $\psi(0,1)$ to $\psi(1,1)$. Let $t,t'\in I$ where $t<t'$ and let $c$ be a Lipschitz path in $I\times I$ from $(t,1)$ to $(t',1)$. Since $c$ is homotopic to the inclusion $(\id,1):[t,t']\hookrightarrow I\times I$, the paths $H\circ c$ and $H\circ(\id,1)=\core{\gamma}|_{[t,t']}$ are homotopic. Since $\core{\gamma}$ is the length minimizer for $[\gamma]$, the restriction $\core{\gamma}|_{[t,t']}$ is also a length minimizer in its homotopy class. Thus, $\length{M}(\core{\gamma}|_{[t,t']})\leq\length{M}(H\circ c)$. Therefore, by the definition of the metric on $T$, 
\[
d_T(\psi(t,1),\psi(t',1))=\length{M}\left(\core{\gamma}|_{[t,t']}\right)
\]
and in particular, $d_T(\psi(0,1),\psi(1,1))=\length{M}(\core{\gamma}).$ Now, since $\core{\gamma}$ is arc length parametrized,
\begin{eqnarray*}
d_T(\psi(t,1),\psi(t',1)) & = & \length{M}\left(\core{\gamma}|_{[t,t']}\right) \\
					 & = & \length{M}(\core{\gamma})~|t'-t| \\
					 & = & d_T(\psi(0,1),\psi(1,1))~|t'-t|.
\end{eqnarray*}
Therefore, the path $\psi|_{I\times\{1\}}$ is indeed the geodesic from $\psi(0,1)$ to $\psi(1,1)$. 

From the argument of Lemma~\ref{Desirable homotopy}, the path $\core{\gamma}'$ is equal to the geodesic in $T$ from $\psi(0,0)=\psi(0,1)$ to $\psi(1,0)=\psi(1,1)$ post-composed by $\varphi$. As the geodesic in discussion is $\psi|_{I\times\{1\}}$, we have that for all $t\in I$,
\[
\core{\gamma}(t)=\varphi\circ\psi(t,1)=\core{\gamma}'(t).
\]

That  $\Ima(\core{\gamma})\subset\Ima(\gamma)$ follows quickly from $\core{\gamma}$ factoring through a geodesic segment. Indeed, for $t\in I$, the point $\psi(t,1)\in T$ is in the geodesic segment $\Ima(\psi|_{I\times\{1\}})$ connecting $\psi(0,0)=\psi(0,1)$ to $\psi(1,0)=\psi(1,1)$. As the image $\Ima(\psi|_{I\times\{0\}})\subset T$ is a subtree containing these points, the geodesic segement $\Ima(\psi|_{I\times\{1\}})\subset\Ima(\psi|_{I\times\{0\}})$ is a subset of the subtree. Hence, there exists $t'\in I$ such that $\psi(t,1)=\psi(t',0)$. Therefore,
\[
\core{\gamma}(t)=\varphi\circ\psi(t,1)=\varphi\circ\psi(t',0)=\gamma(t').
\]
Thus, $\Ima(\core{\gamma})\subset\Ima(\gamma)$ as desired.

\end{proof}

Of note, in the proof of Theorem~\ref{uniqueness of core} we have shown that given an arc length parametrized length minimizer and any homotopy of the length minimzer, the length minimzer factors through a geodesic segment in the metric tree generated by the homotopy via Theorem~\ref{Wenger and Young}.

An immediate consequence of Theorem~\ref{uniqueness of core} is that for every point in a purely 2-unrectifiable metric space, only the trivial class in the first Lipschitz homotopy group can be represented by a loop within each neighborhood of the point, as is now shown.


\begin{corollary}\label{heisenberg non-singular points}
Let $M$ be a purely 2-unrectifiable metric space and let $x\in M$. Let $[\alpha]\in\pilip{1}(M,x)$ be a homotopy class of loops based at $x$ such that for every open neighborhood $U\subset M$ of the point $x$, there exists a Lipschitz loop $\alpha_U\in[\alpha]$ based at $x$ whose image is contained in $U$. Then $[\alpha]$ is the trivial homotopy class, $[\alpha]=[x]$.
\end{corollary}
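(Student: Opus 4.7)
The plan is to reduce the corollary to Theorem~\ref{existence of length minimizer} and the image-containment clause of Theorem~\ref{uniqueness of core}. Since $[\alpha]\in\pilip{1}(M,x)$ is a homotopy class of loops based at $x$, by Theorem~\ref{existence of length minimizer} there exists a length minimizing Lipschitz loop $\core{\alpha}\in[\alpha]$ with $\core{\alpha}(0)=\core{\alpha}(1)=x$. Either $\length{M}(\core{\alpha})=0$, in which case $\core{\alpha}$ is already the constant loop at $x$ and the conclusion $[\alpha]=[x]$ is immediate, or $\length{M}(\core{\alpha})>0$, in which case I will reparametrize $\core{\alpha}$ so that it is arc length parametrized (this does not change the image and preserves membership in $[\alpha]$).

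With $\core{\alpha}$ arc length parametrized, Theorem~\ref{uniqueness of core} is applicable. Fix any open neighborhood $U\subset M$ of the point $x$. By hypothesis there exists a representative $\alpha_U\in[\alpha]$ with $\Ima(\alpha_U)\subset U$. Since $\alpha_U$ is Lipschitz and homotopic rel endpoints to $\core{\alpha}$, Theorem~\ref{uniqueness of core} yields
\[
\Ima(\core{\alpha})\subset\Ima(\alpha_U)\subset U.
\]
As this inclusion holds for every open neighborhood $U$ of $x$ and $M$ is a metric space (hence Hausdorff), intersecting over all such $U$ forces $\Ima(\core{\alpha})\subset\bigcap_U U=\{x\}$.

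Consequently $\core{\alpha}$ is the constant loop at $x$, which represents the trivial class, so $[\alpha]=[\core{\alpha}]=[x]$ in $\pilip{1}(M,x)$. There is no substantive obstacle here: all the difficulty of the result has been absorbed into Theorem~\ref{existence of length minimizer} (for existence of $\core{\alpha}$) and Theorem~\ref{uniqueness of core} (for the image containment $\Ima(\core{\alpha})\subset\Ima(\alpha_U)$); the only bookkeeping step is the harmless reparametrization to arc length so that Theorem~\ref{uniqueness of core} applies.
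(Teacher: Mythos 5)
Your proof is correct and follows essentially the same route as the paper: take the length minimizer $\core{\alpha}$ from Theorem~\ref{existence of length minimizer}, apply the image-containment clause of Theorem~\ref{uniqueness of core} to each $\alpha_U$, and conclude $\Ima(\core{\alpha})=\{x\}$. You are in fact slightly more careful than the paper in explicitly handling the arc-length-parametrization hypothesis of Theorem~\ref{uniqueness of core} via a harmless reparametrization, which the paper's one-line proof glosses over.
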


\begin{proof}
Let $[\alpha]\in\pilip{1}(M,x)$ be a homotopy class of loops based at $x$ such that for every open neighborhood $U\subset M$ of the point $x$, there exists a Lipschitz loop $\alpha_U\in[\alpha]$ based at $x$ whose image is contained in $U$. Then, by Theorem~\ref{existence of length minimizer}, $[\alpha]$ has a length minimizer $\core{\alpha}$ and, by Theorem~\ref{uniqueness of core}, the image of the length minimizer $\core{\alpha}$ is a subset of every neighborhood $U$ of $x$. Therefore, $\core{\alpha}$ is the constant loop at $x$ and thus $[\alpha]=[x]$.
\end{proof}

Using the wording of \cite{perry2024universal}, Corollary~\ref{heisenberg non-singular points} says that every point in a purely 2-unrectifiable supports only trivial local representation. In the language of \cite{Bog}, every point in a purely 2-unrectifiable metric space is non-singular. The harmonic archipelago is an instructive example of a space wherein not every point is non-singular. See Example 1.1 in \cite{Bog}.

To define the metric on the universal Lipschitz path space, it is sufficient that for every point in the underlying metric space, only the trivial class in the first Lipschitz homotopy group can be represented by a loop within each neighborhood of the point. As such, Corollary~\ref{heisenberg non-singular points} is sufficient to define the metric on the universal Lipschitz path space over a purely 2-unrectifiable metric space such as the Heisenberg group $\H^1$.


\end{document}